\newtheorem{theorem}{Theorem}[section]
\newtheorem{lemma}[theorem]{Lemma}
\newtheorem{proposition}[theorem]{Proposition}
\newcommand{\ZZ}{\mathbb{Z}}
\newcommand{\ZN}{\mathbb{N}}
\newcommand{\ZQ}{\mathbb{Q}}
\newcommand{\Gk}{{{\rm gcd}_k }}
\title{Visible lattice points along curves}
\author{Kui Liu}
\address{School of Mathematics and Statistics, Qingdao University, 308 Ningxia Road, Shinan District, Qingdao, Shandong, China}
\email{liukui@qdu.edu.cn}
\author{Xianchang Meng}
\address{Mathematisches Institut,
	Georg-August Universit\"{a}t G\"{o}ttingen,
	Bunsenstra{\ss}e 3-5,
	D-37073 G\"{o}ttingen,
	Germany}
\email{xianchang.meng@uni-goettingen.de}
\keywords{lattice points, joint visibility, M\"{o}bius function}
\subjclass[2010]{Primary 11P21, Secondary 11M99}
\begin{document}
	\begin{abstract}
		This paper concerns the number of lattice points in the plane which are visible  along certain curves to all elements in some set $S$ of lattice points simultaneously. By proposing the concept of level of visibility,  we are able to analyze more carefully about both the  "visible" points and the "invisible" points  in the definition of previous research. 
			We prove asymptotic formulas for the number of lattice points in different levels of visibility. 
		
	\end{abstract}
	\maketitle
	
	\section{Introduction}
	\subsection{Background}
	A lattice point $(m,n)\in\mathbb{N}\times \mathbb{N}$ is said to be visible to the lattice point $(u,v)\in\mathbb{N}\times\mathbb{N}$ along lines if there are no other integer lattice points on the straight line segment joining $(m, n)$ and $(u, v)$. In 1883, it was showed by Sylvester \cite{S} that the proportion of lattice points that are visible to the origin $(0,0)$ is $1/\zeta(2)=6/\pi^2\approx 0.60793$, where $\zeta(s)$ is the Riemann zeta function. Since then, the study of the distribution of visible lattice points continues to intrigue mathematicians till now. For example, one may refer to Adhikari-Granville \cite{A-Granville}, Baker \cite{Baker}, Boca-Cobeli-Zaharescu \cite{Boca-C-Zaharescu},  Chaubey-Tamazyan-Zaharescu \cite{Chaubey-Zaharescu}, Chen \cite{Chen}, Huxley-Nowak \cite{Huxley-N}  for part of related works and some generalizations in recent years.
	
	In 2018, Goins, Harris,
	Kubik and Mbirika \cite{GHKM} considered the integer lattice points in the plane which are visible to the origin $(0,0)$ along curves $y=r x^k$ with $k\in\mathbb{N}$ fixed and some $r\in\mathbb{Q}$. They showed that the proportion of such integer lattice points is $1/\zeta(k+1)$. In the same year, Harris and Omar \cite{HO} futher considered the case of rational exponent $k$. Recently, Benedetti, Estupi\~{ n}\'{a}n and Harris \cite{BEH}  studied the proportion of visible lattice points to the origin along such curves in higher dimensional space.
	
	All the above results are concerned about the lattice points visible to only one base point. It is natural to consider the distribution of lattice points which are visible to more base points simultaneously. For the case of visibility along straight lines, the earliest work originates from Rearick in 1960s. In his Ph.D. thesis, Rearick \cite{R-thesis} first showed that the density of integer lattice points in the plane which are jointly visible along straight lines to $N$ ($N=2$ or $3$) base points is $\prod_p\left(1-N/p^2\right)$, where the base points are  mutually visible in pairs and the product is over all the primes. Then in \cite{R}, he generalized this result to lattice points in higher dimensional space and larger $N$.
	
	The joint visibility of lattice points along curves has not been considered yet. In this paper, we focus on this topic and we also propose the concept of level of visibility. \textbf{Level-1} visibility matches the definition of "$k$-visible" in \cite{GHKM}. We use higher level of visibility to analyze more carefully about the "invisible" points  along certain curves.
	We give asymptotic formulas for the number of lattice points which are visible to a set of $N$ base points along certain curves in different levels of visibility.

	\subsection{Our results}
	For any positive integer $k$ and integer lattice points $(u,v),(m,n)\in\mathbb{N}\times\mathbb{N}$, let $r\in\mathbb{Q}$ be given by $n-v=r(m-u)^k$ and $\mathcal{C}$ be the curve $y-v=r(x-u)^k$. If there is no integer lattice points lying on the segment of $\mathcal{C}$ between points $(m,n)$ and $(u,v)$, we say $(m,n)$ is {\bf (Level-1) $k$-visible} to $(u,v)$. Further, if there is at most one integer lattice points lying on the segment of $\mathcal{C}$ between points $(m,n)$ and $(u,v)$, we say point $(m,n)$ is {\bf Level-2 $k$-visible} to $(u,v)$.
	
	One can see that $k$-visibility is mutual. Precisely, if a point $(m,n)$ is \textbf{Level-1} or \textbf{Level-2} $k$-visible to the point $(u,v)$ along the curve $y-v=r(x-u)^k$, then $(u,v)$ is also \textbf{Level-1} or \textbf{Level-2} $k$-visible to $(m,n)$, respectively, along the curve $y-n=(-1)^{k+1}r(x-m)^k$.
	
	Throughout this paper, we always assume $\boldsymbol{S}$ is a given set of integer lattice points in the plane. We say an integer lattice point $(m,n)$ is \textbf{Level-1} $k$-visible  to $\boldsymbol{S}$  if it belongs to the set
	$$
	\boldsymbol{V}_k^1(\boldsymbol{S}):=\{(m,n)\in\ZN\times\ZN:  (m,n) ~\text{is}~k\text{-visible to every point in}~ \boldsymbol{S} \}.
	$$
	Similarly, we say a point $(m,n)\in \ZN\times\ZN$ is \textbf{Level-2} $k$-visible to $\boldsymbol{S}$ if it belongs to the set
	$$
	\boldsymbol{V}_k^2(\boldsymbol{S}):=\{(m,n)\in\ZN\times\ZN:  (m,n) ~\text{is Level-2}~k\text{-visible to every point in}~ \boldsymbol{S} \}.
	$$
	One may define higher \textbf{Level} $k$-visible points to $\boldsymbol{S}$ this way.
	But in this paper, we focus on  \textbf{Level-1} and \textbf{Level-2} $k$-visible points.
	
	For $x\geq 2$, we consider visible lattice points along curves in the square $[1,x]\times [1,x]$. Denote
	\begin{align*}
	N^1_k(\boldsymbol{S},x):=&\#\{(m,n)\in \boldsymbol{V}_k^1(\boldsymbol{S}): m,n\leq x\}, 
	\end{align*}
	and
	\begin{align*}
	N^2_k(\boldsymbol{S},x):=&\#\{(m,n)\in\boldsymbol{V}_k^2(\boldsymbol{S}): m,n\leq x\}.
	\end{align*}
	
	An important case is that the points of $\boldsymbol{S}$ are pairwise $k$-visible to each other. The cardinality of such $\boldsymbol{S}$ can't be too large. In fact, we have $\#\boldsymbol{S}\le 2^{k+1}$ by Proposition \ref{prop: cardinality of sets whoses elements are muturally k-visible} in the next section.  For such type of $\boldsymbol{S}$, we obtain the following asymptotic formulas for $N^1_k(\boldsymbol{S},x)$ and $N_k^2(\boldsymbol{S},x)$.
	
	\begin{theorem}\label{thm: Level 1 density}
		 Assume the elements of $\boldsymbol{S}$ are pairwise $k$-visible to each other and $N=\#\boldsymbol{S}< 2^{k+1}$. For any $k\geq 2$, we have
		\begin{align}\label{eq: Theorem for N_k^1(S,x)}
		N^1_k(\boldsymbol{S}, x)=x^2\prod_p \Bigg(1-\frac{N}{p^{k+1}} \Bigg)+E_1(x),
		\end{align}
		where $p$ runs over all primes, and
		$$ E_1(x)=\begin{cases}
		O_k(x\log^N x),& \text{if}~1\le N\le k;\\
		O_{k,\varepsilon}(x^{2-\frac{2k}{N+k}+\varepsilon}), &\text{if}~k<N<2^{k+1}. 
		\end{cases}  $$
		
	\end{theorem}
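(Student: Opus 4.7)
The plan is a Möbius-inversion argument combined with an Euler-product evaluation, with the pairwise $k$-visibility hypothesis supplying the combinatorial decoupling between the $N$ base points.

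First I would prove the arithmetic characterization: for $(m,n)\neq(u,v)$, the point $(m,n)$ is $k$-visible to $(u,v)$ if and only if $\gcd_k(m-u,\,n-v)=1$, where $\gcd_k(a,b)$ denotes the largest $d\ge 1$ with $d\mid a$ and $d^k\mid b$. Indeed an intermediate integer point on $y-v=r(x-u)^k$ corresponds to $0<t<|m-u|$ with $(m-u)^k\mid(n-v)\,t^k$, and the least such $t$ equals $|m-u|/\gcd_k(m-u,n-v)$. The identity
\begin{equation*}
\mathbf{1}[\gcd_k(a,b)=1] \;=\; \sum_{d\mid a,\ d^{k}\mid b} \mu(d)
\end{equation*}
then expands $N^{1}_k(\boldsymbol S,x)$ into
\begin{equation*}
N^{1}_k(\boldsymbol S,x) \;=\; \sum_{d_1,\dots,d_N\ge 1} \mu(d_1)\cdots\mu(d_N)\, A(\vec d;x),
\end{equation*}
where $A(\vec d;x)=\#\{(m,n)\in[1,x]^{2}:\, d_i\mid m-u_i,\ d_i^{k}\mid n-v_i\ \forall\, i\}$.

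Next I would use the pairwise $k$-visibility of the points in $\boldsymbol S$ to restrict to pairwise coprime $\vec d$. If some prime $p$ divided $\gcd(d_i,d_j)$ for $i\neq j$, then a nonempty $A(\vec d;x)$ would force $p\mid u_i-u_j$ and $p^{k}\mid v_i-v_j$, contradicting $\gcd_k(u_i-u_j,v_i-v_j)=1$. With the $d_i$ pairwise coprime (and squarefree from $\mu$), the Chinese Remainder Theorem gives $A(\vec d;x)=(x/D+O(1))(x/D^{k}+O(1))$ for $D=d_1\cdots d_N$. The coefficient of $x^{2}$ in the main term is then
\begin{equation*}
\sum_{\substack{\vec d\ \text{squarefree}\\ \text{pairwise coprime}}} \frac{\mu(d_1)\cdots\mu(d_N)}{D^{k+1}} \;=\; \prod_p \Bigl(1-\frac{N}{p^{k+1}}\Bigr),
\end{equation*}
since at each prime at most one $d_i$ contributes the factor $p$, producing either $1$ or $-N/p^{k+1}$. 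This matches the claimed main term.

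The main obstacle is the error analysis. After truncating at $d_i\le T$, the tail of the Euler-product sum contributes $O_k(x^{2}/T^{k})$ (bounding the remaining factors by $\zeta(k+1)^{N-1}$), while expanding the two $O(1)$'s in $A(\vec d;x)$ and summing yields
\begin{equation*}
\sum_{d_1,\dots,d_N\le T}\Bigl(\frac{x}{D}+1\Bigr) \;\ll\; x\log^{N} T + T^{N}.
\end{equation*}
Taking $T=x^{1/k}$ in the regime $N\le k$ makes $x^{2}/T^{k}=x$ and $T^{N}\le x$ while $x\log^{N}T$ dominates, giving $E_1(x)\ll_k x\log^{N}x$. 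For $k<N<2^{k+1}$, the balanced choice $T=x^{2/(N+k)}$ equates $x^{2}/T^{k}$ with $T^{N}=x^{2-2k/(N+k)}$, producing $E_1(x)\ll_{k,\varepsilon} x^{2-2k/(N+k)+\varepsilon}$ once divisor-type losses (e.g.\ $\tau(D)\ll D^{\varepsilon}$) arising in the pairwise-coprime bookkeeping are absorbed. Together these two regimes reproduce $E_1(x)$ exactly.
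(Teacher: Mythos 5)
Your overall route is the paper's: characterize $k$-visibility via $\gcd_k$, apply M\"{o}bius inversion in each of the $N$ coordinates, use pairwise $k$-visibility of $\boldsymbol{S}$ to force the $d_i$ to be pairwise coprime, count by CRT, and identify the main term with the Euler product through $\sum_n \mu(n)\tau_N(n)n^{-(k+1)}$. The main-term derivation and the regime $1\le N\le k$ are sound (with $T=x^{1/k}$ the discarded range is genuinely empty, since $d_i^k\mid n-v_i$ with $n\ne v_i$ forces $d_i\ll x^{1/k}$), modulo the routine $O(x)$ bookkeeping for points with $m=u_j$ or $n=v_j$, where the $\gcd_k$ criterion does not apply.

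The gap is in the tail for $k<N<2^{k+1}$. You bound the discarded terms (tuples $\vec d$ with some $d_i>T$) by $O(x^2/T^k)$, ``bounding the remaining factors by $\zeta(k+1)^{N-1}$'', but that estimates the tail of the Dirichlet series $\sum\mu(\vec d)D^{-(k+1)}$, not the tail of $\sum\mu(\vec d)A(\vec d;x)$. The defect $A(\vec d;x)-x^2D^{-(k+1)}$ is $O(x/D+x/D^k+1)$ per tuple, and the ``$+1$'' summed over all admissible tuples (each $d_i\le x^{1/k}$) is of size $x^{N/k}$, which exceeds your target $x^{2-2k/(N+k)}=x^{2N/(N+k)}$ exactly when $N>k$, and can even exceed $x^2$ (e.g.\ $k=2$, $N=7$). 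The underlying issue is that $(x/D+1)(x/D^k+1)\ge 1$ even when $A(\vec d;x)=0$, so termwise bounds on the tail are too lossy. The fix --- and the place the $x^{\varepsilon}$ in $E_1$ actually originates --- is to switch the order of summation, as the paper does: for fixed $(m,n)$ the number of admissible $\vec d$ is $\prod_j\tau\big(\gcd_k(m-u_j,n-v_j)\big)\ll_{\varepsilon} x^{\varepsilon}$, and the condition that some $d_i>T$ forces $\gcd_k(m-u_i,n-v_i)>T$ for some $i$, whence the tail is $\ll x^{\varepsilon}\sum_i\sum_{d>T}(x/d)(x/d^k+1)\ll x^{2+\varepsilon}T^{-k}$; balancing against $T^N$ at $T=x^{2/(N+k)}$ gives the stated exponent. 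Your parenthetical about absorbing $\tau(D)\ll D^{\varepsilon}$ gestures at this, but attributes it to the pairwise-coprimality bookkeeping (where it is not needed) rather than to the tail estimate, where it is indispensable.
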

	\textbf{Remark 1.}
	If $N=\#\boldsymbol{S}=2^{k+1}$, by Proposition \ref{prop: cardinality of sets whoses elements are muturally k-visible}, there is no lattice point outside $\boldsymbol{S}$ which is (\textbf{Level-1}) $k$-visible to all elements of $\boldsymbol{S}$.

	By the above Theorem, the density of (\textbf{Level-1}) $k$-visible points to every elements of $\boldsymbol{S}$ is $\prod_p \left(1-N/p^{k+1}\right)$. For $k=1$, it is done by the work in \cite{R}, where the author studied the visible points along straight lines. The special case $N=1$ for $k\ge 2$ in Theorem \ref{thm: Level 1 density} covers the result in \cite{GHKM}, where only the main term was given.

	We also give asymptotic formulas for \textbf{Level-2} $k$-visible points.  Note that such set actually includes some "invisible" points in the definition of previous research. We are able to analyze more carefully about these "invisible" points. 
	\begin{theorem}\label{thm: Level 2 density}
		Assume the elements of $\boldsymbol{S}$ are pairwise $k$-visible to each other and $N=\#\boldsymbol{S}\le  2^{k+1}$. For any $k\geq 1$, we have 
		\begin{align}\label{eq: Theorem for N_k^2(S,x)}
		N_k^2(\boldsymbol{S}, x)=  N_k^1(\boldsymbol{S}, x)+x^2 \frac{N}{2^{k+1}}\left(1-\frac{1}{2^{k+1}}\right)\prod_{p>2~{\rm prime}} \bigg( 1-\frac{N}{p^{k+1}}\bigg)+E_2(x),
		\end{align}
		where
		$$ E_2(x)=\begin{cases}
		O_k(x\log^N x),& \text{if}~1\le N\le k;\\
		O_{k,\varepsilon}(x^{2-\frac{2k}{N+k}+\varepsilon}), &\text{if}~k<N\le 2^{k+1}.
		\end{cases}   $$
		
	\end{theorem}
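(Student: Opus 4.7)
The plan is to write $N_k^2(\boldsymbol{S}, x) = N_k^1(\boldsymbol{S}, x) + D(\boldsymbol{S}, x)$, invoke Theorem~\ref{thm: Level 1 density} for the first summand, and handle the difference $D(\boldsymbol{S}, x) = N_k^2 - N_k^1$ directly. The key preliminary step is to characterize when the curve $y - v = r(x-u)^k$ has exactly one intermediate lattice point between $(u,v)$ and $(m,n)$. Writing $a = m - u$ and $b = n - v$, a straightforward prime-by-prime analysis shows that an integer $d \in [0,a]$ gives a lattice point on the curve exactly when $d$ is divisible by
\begin{equation*}
M := \prod_p p^{\gamma_p}, \qquad \gamma_p := \max\bigl(0,\ \lceil v_p(a) - v_p(b)/k\rceil\bigr).
\end{equation*}
Level-1 visibility corresponds to $M = a$; having exactly one intermediate corresponds to $M = a/2$, which (by casework on $v_2(a)$) unpacks into: $v_2(a) \geq 1$; $v_2(b) \geq k$, with the extra constraint $v_2(b) \leq 2k-1$ whenever $v_2(a) \geq 2$; and the odd-prime constraint $\neg(p \mid a \wedge p^k \mid b)$ for every odd prime~$p$.

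Next, I would exploit pairwise $k$-visibility of $\boldsymbol{S}$ in two ways. First, the one-intermediate condition can hold for at most one $(u_0,v_0) \in \boldsymbol{S}$: two such points would force $2 \mid (u_0 - u_1)$ and $2^k \mid (v_0 - v_1)$, contradicting their mutual $k$-visibility. This partitions the difference as
\begin{equation*}
D(\boldsymbol{S}, x) = \sum_{(u_0,v_0) \in \boldsymbol{S}} M(u_0, v_0; x),
\end{equation*}
where $M(u_0,v_0; x)$ counts $(m,n) \in [1,x]^2$ satisfying the one-intermediate condition at $(u_0,v_0)$ and Level-1 visibility at every other base point. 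Second, once the $p = 2$ part of the one-intermediate condition at $(u_0,v_0)$ is imposed, the $p = 2$ part of the Level-1 condition at every other $(u,v) \in \boldsymbol{S}$ is automatic (either $2 \nmid (u_0 - u)$ or $2^k \nmid (v_0 - v)$, by pairwise $k$-visibility), so the residual constraint reduces to a local condition at $p = 2$ coming from $(u_0,v_0)$ alone, plus the odd-prime Level-1 condition with respect to all of~$\boldsymbol{S}$.

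Once the problem is reduced this way, the counting of $M(u_0, v_0; x)$ runs parallel to the Level-1 proof: apply M\"obius inversion over odd squarefree moduli for the odd-prime piece and evaluate the $p = 2$ piece as a residue count. A short $2$-adic density calculation shows the local factor equals $\tfrac{1}{2^{k+1}}\bigl(1 - \tfrac{1}{2^{k+1}}\bigr)$, recovered by adding the densities of the subcases $\{v_2(a) = 1,\ v_2(b) \geq k\}$ and $\{v_2(a) \geq 2,\ k \leq v_2(b) \leq 2k-1\}$. Combined with the odd part $\prod_{p>2}(1 - N/p^{k+1})$ and summed over the $N$ choices of $(u_0,v_0)$, this gives the claimed main term. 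The error term $E_2(x)$ follows from the same two regimes used for $E_1(x)$ in Theorem~\ref{thm: Level 1 density}: elementary divisor bounds when $1 \leq N \leq k$, and a lattice-point / hyperbola-type argument when $k < N \leq 2^{k+1}$.

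The main obstacle is the $p = 2$ analysis. The condition $M = a/2$ is not a single divisibility constraint on the pair $(v_2(a), v_2(b))$ but a union of two subcases that happens to sum to the clean local factor $\tfrac{1}{2^{k+1}}\bigl(1-\tfrac{1}{2^{k+1}}\bigr)$; keeping track of this, together with verifying that the $p = 2$ Level-1 constraints at the other base points vanish automatically once pairwise $k$-visibility is used, is the essential new work beyond Theorem~\ref{thm: Level 1 density}. After these $2$-adic features are isolated, the odd-prime M\"obius inversion and the error-term bookkeeping proceed exactly as in the Level-1 proof.
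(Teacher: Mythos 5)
Your proposal is correct and follows essentially the same route as the paper: the same decomposition $N_k^2=N_k^1+\sum_{(u_0,v_0)\in\boldsymbol{S}}(\cdots)$, the same use of pairwise $k$-visibility both to show that at most one base point can carry the one-intermediate condition and to force coprimality of the M\"obius moduli, and the same $D$-split with divisor-function bounds for the two error regimes. The only difference is presentational: you re-derive the criterion ``exactly one intermediate point iff $\Gk(m-u,n-v)=2$'' via $p$-adic valuations and obtain the local factor $\frac{1}{2^{k+1}}\left(1-\frac{1}{2^{k+1}}\right)$ by summing two $2$-adic subcases, whereas the paper quotes Lemma~\ref{lem:visible criterion}(ii), rewrites the condition as $2\mid m-u$, $2^k\mid n-v$, $\Gk\left(\frac{m-u}{2},\frac{n-v}{2^k}\right)=1$, and absorbs everything into a single M\"obius inversion.
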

\textbf{Remark 2.}  Note that when $N=2^{k+1}$, $N_k^1(\boldsymbol{S}, x)=0$, there is no \textbf{Level-1} $k$-visible points to $\boldsymbol{S}$. But there are still positive proportion of lattice points in the plane which are \textbf{Level-2} $k$-visible to  $\boldsymbol{S}$.

For the special case $N=1$ and $k=1$, our problem is the same as the so-called "primitive lattice problem" inside a square. Nowak \cite{Nowak}, Zhai \cite{Zhai} and Wu \cite{Wu} have studied the number of primitive lattice points inside a circle. Primitive lattice points in general planar domains have also been studied by Hensley \cite{Hensley}, Huxley and Nowak \cite{Huxley-N} and Baker \cite{Baker} etc. Assuming the Riemann hypothesis(RH), they continuously improved the error term of the concerned asymptotic formulas by estimating certain exponential sums. One may wonder how much we can do to improve the estimates of $E_1(x)$ and $E_2(x)$ by similar argument under RH. However, we do not focus on pursuing the best possible error term in this paper.

	Taking $\boldsymbol{S}=\{ (0,0), (1,1)\}$, we did  numerical calculations for  densities of \textbf{Level-1} and \textbf{Level-2} $k$-visible points for $x=10000$ and $k=2, 3, \ldots, 9$ (See Table \ref{table-S-2} and Figure \ref{figure-S2} below).  We see that the numerical results match the theoretical predictions very well.

	\begin{table}[H]
		\centering
		\begin{tabular}{|c|c c|| c c|} 
			\hline
			&\multicolumn{2}{c||}{Level-1 } & \multicolumn{2}{c|}{Level-2}\\
			\hline
			$k$ & Numerical & Theoretical & Numerical & Theoretical\\ 
			\hline
			2 & 0.67680152 & 0.67689274 & 0.87422663 & 0.87431979\\ 
			3 & 0.84972063 & 0.84973299 & 0.96357826 & 0.96353652\\
			4 & 0.92895008 & 0.92905919 & 0.98893214 & 0.98906093\\
			5 & 0.96584343 & 0.96595054 & 0.99649707 & 0.99662336\\
			6 & 0.98333499 & 0.98344709 & 0.99888344 & 0.99893540\\ 
			7 & 0.99173415 & 0.99187962 & 0.99953337 & 0.99965918\\
			8 & 0.99583374 & 0.99599147 & 0.99973335 & 0.99988969\\
			9 & 0.99790020 & 0.99801286 & 0.99980001 & 0.99996401\\
			\hline
		\end{tabular}
		\caption{Densities of $k$-visible points to set of two elements}
		\label{table-S-2}
	\end{table}
	
	\begin{figure}[h]
		\centering
		\includegraphics[width=0.65\textwidth]{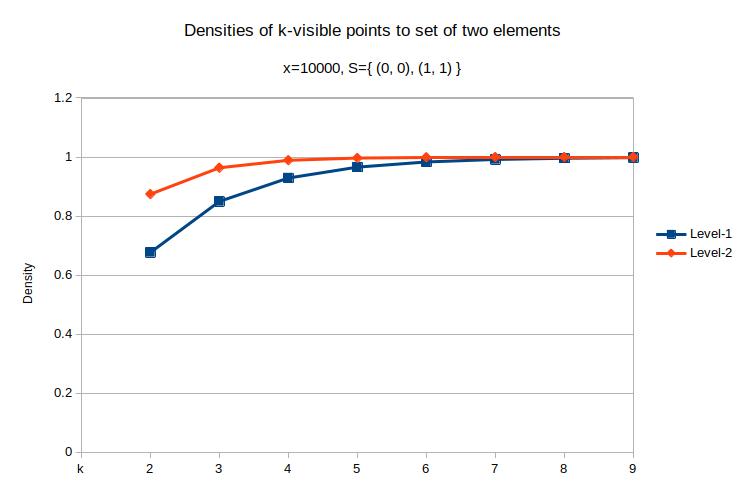}
		\caption{Densities of $k$-visible points to set of two elements}
		\label{figure-S2}
	\end{figure}
	
	We also calculate the case when $\boldsymbol{S}=\{ (0,0), (1,2), (2,1) \}$,  and we get the following data for  densities of\textbf{ Level-1} and \textbf{Level-2} $k$-visible points to $\boldsymbol{S}$ (See Table \ref{table-S-3} and Figure \ref{figure-S3}). 
	\begin{table}[H]
		\centering
		\begin{tabular}{|c|c c|| c c|} 
			\hline
			&\multicolumn{2}{c||}{Level-1 } & \multicolumn{2}{c|}{Level-2}\\
			\hline
			$k$ & Numerical & Theoretical & Numerical & Theoretical\\ 
			\hline
			2 & 0.53443474 & 0.53456687 & 0.81503364 & 0.81521448\\ 
			3 & 0.77729627 & 0.77737343 & 0.94553393 & 0.94555518\\
			4 & 0.89379137 & 0.89401525 & 0.98333222 & 0.98360945\\
			5 & 0.94873357 & 0.94899382 & 0.99464610 & 0.99493640\\
			6 & 0.97490498 & 0.97518170 & 0.99822532 & 0.99840321\\ 
			7 & 0.98750246 & 0.98782124 & 0.99920012 & 0.99948878\\
			8 & 0.99365123 & 0.99398750 & 0.99950006 & 0.99983453\\
			9 & 0.99675061 & 0.99701934 & 0.99960004 & 0.99994602\\
			\hline
		\end{tabular}
		\caption{Densities of $k$-visible points to set of three elements}
		\label{table-S-3}
	\end{table}

	\begin{figure}[H]
		\centering
		\includegraphics[width=0.65\textwidth]{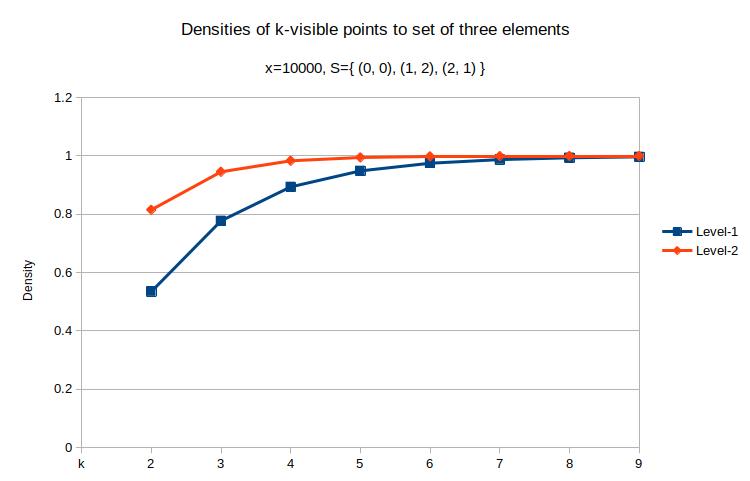}
		\caption{Densities of $k$-visible points to set of three elements}
\label{figure-S3}
	\end{figure}

	{\bf Notations.} We use $\mathbb{Z}$ to denote the set of integers; $\mathbb{N}$ to denote the set of positive integers; $\mathbb{Q}$ to denote the set of rational numbers; $\# S$ to denote the cardinality of a set $S$. As usual, we use the expressions $f=O(g)$ or $f\ll g$ to mean $|f|\leq Cg$ for some constant $C>0$. In the case when this constant $C>0$ may depend on some parameters $\rho$, we write $f=O_\rho (g)$ or $f\ll_\rho g$.

	\section{Preliminaries}
	We define the {\bf degree-$k$ greatest common divisor} of $m$, $n\in\ZZ$ as
	$$
	\Gk(m,n):=\max\{d\in\ZN: d\mid m,\ d^k\mid n \}.
	$$
	
	\begin{proposition}\label{prop: cardinality of sets whoses elements are muturally k-visible}
		For any integer $k\geq 1$, assume any two distinct elements $(u_i, v_i)$, $(u_j, v_j)\in \boldsymbol{S}$ are $k$-visible to each other, then we have $\#\boldsymbol{S}\leq 2^{k+1}$.
	\end{proposition}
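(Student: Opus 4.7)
The plan is to exhibit an injection $\phi\colon \boldsymbol{S}\to (\ZZ/2\ZZ)\times(\ZZ/2^k\ZZ)$, from which the bound $\#\boldsymbol{S}\leq 2\cdot 2^k=2^{k+1}$ is immediate. The natural candidate, dictated by the two relevant scales in the problem (bisecting the $x$-interval and dividing by $2^k$ in the curve equation), is
\[
\phi(u,v):=(u\bmod 2,\; v\bmod 2^k).
\]

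To establish injectivity, I would take two distinct points $(u_i,v_i),(u_j,v_j)\in\boldsymbol{S}$ with $\phi(u_i,v_i)=\phi(u_j,v_j)$ and use the connecting curve to produce an interior lattice point, thereby contradicting $k$-visibility. Since the definition of $k$-visibility along $y-v_i=r(x-u_i)^k$ requires $u_i\neq u_j$, I can form $r=(v_j-v_i)/(u_j-u_i)^k$ and consider the $x$-midpoint
\[
x_0:=\frac{u_i+u_j}{2}.
\]
The congruence $u_i\equiv u_j\pmod 2$ makes $x_0$ an integer, and $u_i\neq u_j$ places it strictly between $u_i$ and $u_j$. Substituting into the curve equation yields
\[
y_0-v_i = r\left(\frac{u_j-u_i}{2}\right)^k=\frac{v_j-v_i}{2^k},
\]
which is an integer precisely because $v_j\equiv v_i\pmod{2^k}$. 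Hence $(x_0,y_0)$ is a lattice point lying strictly on the interior of the curve segment joining $(u_i,v_i)$ and $(u_j,v_j)$, contradicting $k$-visibility.

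I do not foresee any serious obstacle; the main conceptual point is that halving $u_j-u_i$ in $x$ corresponds to dividing $v_j-v_i$ by $2^k$ in $y$, which is exactly what pairs the two moduli $2$ and $2^k$. The only mild checks are that the argument still goes through in the degenerate horizontal case $v_i=v_j$ (where the congruence modulo $2^k$ is automatic and $(x_0,v_i)$ is the required interior point), and that the bound is sharp: for $k=1$ the four points $\{(0,0),(1,0),(0,1),(1,1)\}$ form a pairwise $1$-visible set of size $2^{k+1}=4$.
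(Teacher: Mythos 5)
Your proof is correct and follows essentially the same route as the paper: both arguments pigeonhole $\boldsymbol{S}$ through the map $(u,v)\mapsto(u\bmod 2,\ v\bmod 2^k)$ into a set of size $2^{k+1}$ and derive a contradiction with pairwise $k$-visibility when two points collide. The only cosmetic difference is that you exhibit the offending interior lattice point explicitly at the $x$-midpoint, whereas the paper invokes its criterion $\gcd_k(u_2-u_1,v_2-v_1)\geq 2$; these are the same fact, since the divisor $2$ corresponds exactly to the parameter value $t=1/2$ on the connecting curve.
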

	\begin{proof}
		To see this, we  consider the map
		$$\lambda: \boldsymbol{S}\rightarrow\widetilde{\boldsymbol{S}}:=\{  (u\bmod 2, v\bmod 2^{k}) : (u, v)\in \boldsymbol{S} \}.$$
		The size of the image $\widetilde{\boldsymbol{S}}$ is at most $2^{k+1}$. If $\boldsymbol{S}$ has more than $2^{k+1}$ points, there must be two distinct elements which map to the same element in $\widetilde{\boldsymbol{S}}$, say 
		$$
		\lambda((u_1, v_1) )=\lambda((u_2, v_2)).
		$$
		Thus we have
		$$2\mid (u_2-u_1) ~\text{and}~ 2^k\mid (v_2-v_1),$$
		and hence $\Gk(u_2-u_1, v_2-v_1)\geq 2$, which contradicts our assumption on $\boldsymbol{S}$.
	\end{proof}
	
	By the definition of $k$-visible points and elementary argument, we get the following lemma. One may refer to \cite{GHKM} (Proposition 3) for similar argument.  Here we omit the proof.
	\begin{lemma}\label{lem:visible criterion}
	For any $k\ge 1$, if $m-u\ne 0$ and $n-v\ne 0$, 	we have 
		\begin{enumerate}
			\item[(i)] Point $(m,n)$ is $k$-visible to point $(u,v)$ if and only if $\Gk (m-u,n-v)=1$.
			\item[(ii)] There exists exactly one integer point lying on the segment of the curve $y-v=r(x-u)^k$  joining $(u,v)$ and $(m,n)$ for some $r\in\ZQ$ if and only if $\Gk (m-u,n-v)=2$.
		\end{enumerate}
		\end{lemma}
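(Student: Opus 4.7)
The plan is to prove a single, stronger statement that subsumes both (i) and (ii): the number of integer lattice points strictly between $(u,v)$ and $(m,n)$ on the curve $y - v = r(x-u)^k$, where $r = (n-v)/(m-u)^k$, is exactly $\Gk(m-u, n-v) - 1$. Part (i) is then the case where this count is $0$, and part (ii) is the case where it is $1$.

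First I would reduce the geometric question to a purely arithmetic one. Set $M = m - u$ and $N = n - v$, and assume $M > 0$ (the case $M < 0$ being handled by the obvious symmetry $t \mapsto -t$). An integer point $(a, b)$ with $u < a < m$ lies on the curve if and only if $t := a - u \in \{1, 2, \ldots, M-1\}$ and $b - v = Nt^k/M^k \in \ZZ$. So the count of interior integer lattice points on the curve segment is exactly
$$
\#\bigl\{\, t \in \{1, \ldots, M-1\} : M^k \mid N t^k \,\bigr\}.
$$

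Next, let $d := \Gk(M, N)$ and write $M = d M_1$, $N = d^k N_1$ (permissible since $d \mid M$ and $d^k \mid N$). The maximality of $d$ in the definition of $\Gk$ forces $\Gk(M_1, N_1) = 1$. Cancelling $d^k$, the divisibility $M^k \mid N t^k$ reduces to $M_1^k \mid N_1 t^k$. The heart of the proof is then the claim that this last divisibility holds if and only if $M_1 \mid t$. The ``if'' direction is immediate. For the converse, I would set $e = \gcd(M_1, t)$ and factor $M_1 = e M_2$, $t = e t_1$ with $\gcd(M_2, t_1) = 1$; then $M_1^k \mid N_1 t^k$ simplifies to $M_2^k \mid N_1 t_1^k$, and coprimality of $M_2$ and $t_1$ forces $M_2^k \mid N_1$. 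Combined with $M_2 \mid M_1$, this would contradict $\Gk(M_1, N_1) = 1$ unless $M_2 = 1$, whence $M_1 \mid t$.

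Therefore the valid values of $t$ in $\{1, \ldots, M-1\}$ are precisely $M_1, 2 M_1, \ldots, (d-1) M_1$, giving exactly $d - 1$ interior integer lattice points on the curve. Setting $d = 1$ yields (i) and $d = 2$ yields (ii). The only technical obstacle I anticipate is keeping track of signs when $M < 0$ or $N < 0$, but this is routine bookkeeping: the divisibility condition $M^k \mid N t^k$ is insensitive to the sign of $M$, and the parametrization $t = |a - u|$ produces the same count.
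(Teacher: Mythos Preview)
Your proof is correct and, in fact, cleaner and more informative than what the paper has in mind. The paper omits the proof entirely, referring to Proposition~3 of \cite{GHKM}; the authors' own sketch (suppressed in the final version) parametrizes interior points by a rational parameter $t=a/b\in(0,1)$ with $\gcd(a,b)=1$, observes that integrality forces $b\mid(m-u)$ and $b^k\mid(n-v)$, and then treats the cases $\Gk=1$ and $\Gk=2$ separately by analyzing which denominators $b$ can occur. Your route is genuinely different in packaging: you parametrize by the integer displacement $t=a-u$, reduce the condition to $M_1^k\mid N_1 t^k$ after stripping out $d=\Gk(M,N)$, and prove the sharp equivalence $M_1^k\mid N_1 t^k \Leftrightarrow M_1\mid t$ via the $\gcd$ trick with $M_2$. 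This yields the exact count $d-1$ of interior lattice points in one stroke, from which (i) and (ii) fall out as the special cases $d=1,2$. The gain is uniformity and a strictly stronger statement; the paper's denominator viewpoint is perhaps more intuitive for a first reading but requires ad hoc case analysis for each value of $\Gk$.
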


	We  also need  the following well-known result for $l$-fold divisor function $\tau_l(n)=\sum_{d_1\cdots d_l=n} 1$. 
	\begin{lemma}[\cite{Iwaniec-Kowaski}, formula (1.80)]\label{lem: mean value of tau_k}
		Let $l\geq 2$ be an integer. For any $x\geq 2$, we have
		$$
		\sum\limits_{n\leq x}\tau_l(n)\ll_{l} x\log^{l-1} x.
		$$
	\end{lemma}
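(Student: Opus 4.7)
The plan is a direct combinatorial computation. Since $\tau_l(n) = \#\{(d_1,\ldots,d_l)\in\ZN^l : d_1\cdots d_l = n\}$, interchanging the order of summation gives
\[
\sum_{n\leq x}\tau_l(n) \;=\; \#\{(d_1,\ldots,d_l)\in\ZN^l : d_1\cdots d_l\leq x\}.
\]
I would then fix $d_1,\ldots,d_{l-1}$ (each necessarily $\leq x$) and count the admissible values of $d_l$. For each such $(l-1)$-tuple the number of $d_l\geq 1$ satisfying $d_1\cdots d_l\leq x$ is exactly $\lfloor x/(d_1\cdots d_{l-1})\rfloor$, which is at most $x/(d_1\cdots d_{l-1})$.

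Summing over $d_1,\ldots,d_{l-1}\leq x$ and factoring the resulting multiple sum yields
\[
\sum_{n\leq x}\tau_l(n) \;\leq\; x\sum_{d_1,\ldots,d_{l-1}\leq x}\frac{1}{d_1\cdots d_{l-1}} \;=\; x\left(\sum_{d\leq x}\frac{1}{d}\right)^{l-1}.
\]
The inner harmonic sum satisfies $\sum_{d\leq x}1/d \leq 1+\log x \ll \log x$ for $x\geq 2$, and raising to the $(l-1)$-st power absorbs a factor depending only on $l$. This gives the claimed bound $\sum_{n\leq x}\tau_l(n)\ll_l x\log^{l-1}x$.

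An alternative route is induction on $l$ via the convolution identity $\tau_l = \mathbf{1}*\tau_{l-1}$. The base case $l=2$ is the classical Dirichlet divisor estimate $\sum_{n\leq x}\tau(n)\ll x\log x$ (obtained by the hyperbola method), and the inductive step gives
\[
\sum_{n\leq x}\tau_l(n) \;=\; \sum_{d\leq x}\sum_{m\leq x/d}\tau_{l-1}(m) \;\ll\; \sum_{d\leq x}\frac{x}{d}\log^{l-2}\!\frac{x}{d} \;\ll\; x\log^{l-1}x.
\]
The main obstacle is essentially nonexistent, as this is a textbook estimate; the only point requiring attention is that the implied constant depends solely on $l$ and not on $x$, which is automatic in either approach since every inequality is elementary with an explicit constant.
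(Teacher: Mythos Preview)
Your proof is correct. Note, however, that the paper does not supply its own proof of this lemma: it is stated with a citation to Iwaniec--Kowalski (formula (1.80)) and used as a black box. Your direct combinatorial argument --- rewriting $\sum_{n\le x}\tau_l(n)$ as a count of $l$-tuples with product at most $x$, bounding the last coordinate by $x/(d_1\cdots d_{l-1})$, and then factoring into $(l-1)$ harmonic sums --- is exactly the standard textbook derivation, and the inductive alternative via $\tau_l=\mathbf{1}*\tau_{l-1}$ is equally valid. Either yields the bound with an implied constant depending only on $l$, which is all the paper requires.
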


	\section{Proof of Theorem \ref{thm: Level 1 density}}\label{Proof-Thm1}
	
	Given a set $\boldsymbol{S}$, if we shift $\boldsymbol{S}$ such that it contains the origin, the error occurs to  our counting function is $O_{\boldsymbol{S}}(x)$. Thus, we may assume $(0,0)\in \boldsymbol{S}$. 
	Denote the elements of $\boldsymbol{S}$ as $(u_j, v_j),\ 0\leq j\leq N-1$ with $(u_0,v_0)=(0,0)$. 
	By Proposition \ref{prop: cardinality of sets whoses elements are muturally k-visible}, the contribution of points $(m, n)$  with $m=u_j$ or $n=v_{j'}$ for some $j, j'$  is  $O(|\boldsymbol{S}|x)=O_k(x)$. Hence, we only need to  estimate the contribution of points $(m, n)$ with $m\ne u_j$ and $n\ne v_{j'}$ for all $0\leq j, j'\leq N-1$.   
	Throughout all our proofs, we implicitly assume the input of $\Gk(*, *)$ has no zero coordinates unless otherwise specified.

	By Lemma \ref{lem:visible criterion} we have
	\begin{align*}
	N^1_k(\boldsymbol{S}, x)&=\sum_{\substack{m, n\leq x\\\Gk(m-u_j, n-v_j)=1\\ m\ne u_j, n\ne v_j\\0\leq j\leq N-1
	}} 1+O_k(x)=: \widetilde{N}^1_k(\boldsymbol{S},x)+O_k(x).
	\end{align*}
	Applying the formula
	\begin{align}\label{eq: char. function of 1}
	\sum_{d|n}\mu(d)=
	\begin{cases}
	1,\ {\rm if}\ n=1;\\
	0,\ {\rm otherwise},
	\end{cases}
	\end{align}
	where $\mu$ is the M{\"o}bius function, we write
	\begin{align}\label{eq: Expression of N_k^1(S,x)}
	\widetilde{N}^1_k(\boldsymbol{S},x)=\sum_{m, n\leq x}\sum\limits_{\substack{d_j|\gcd_k(m-u_j,n-v_j)\\0\leq j\leq N-1}}\mu(d_0)\cdots\mu(d_{N-1}).
	\end{align}

	Let $D>0$ be a parameter to be chosen later. Divide the sum over $d_0,\cdots,d_{N-1}$ into two parts: $d_0\cdots d_{N-1}\leq D$ and $d_0\cdots d_{N-1}> D$, and denote their contributions  to $\widetilde{N}^1_k(\boldsymbol{S},x)$ by $\sum_{\leq}$ and $\sum_{>}$ respectively. Then we have
	\begin{align}\label{eq: Divide N^1_k(S,x)}
	\widetilde{N}^1_k(\boldsymbol{S},x)={\sum}_{\leq}+{\sum}_{>}.
	\end{align}
	
	For $\sum_{\leq}$, we change the order of the summation and obtain
	\begin{align}
	{\sum}_{\leq}=\sum_{\substack{d_0\cdots d_{N-1}\leq D}}\mu(d_0)\cdots\mu(d_{N-1})\Bigg(\sum_{\substack{m,n\leq x\\ d_j\mid m-u_j,d_j^k\mid n-v_j\\ 0\leq j\leq N-1}}1\Bigg).
	\end{align}
	Note that $(u_0,v_0)=(0,0)$, then for any given $d_0$, $\cdots$, $d_{N-1}$, the inner sum over $m,n$ in the above formula actually equals
	$$
	\Bigg(\sum_{\substack{s\leq x/d_0\\sd_0\equiv u_j(\bmod d_j)\\1\leq j\leq N-1}}1\Bigg)\Bigg(\sum_{\substack{t\leq x/d_0^k\\td_0^k\equiv v_j(\bmod d_{j}^k)\\1\leq j\leq N-1}}1\Bigg).
	$$ 
	Since the points in $\boldsymbol{S}$ are mutually $k$-visible, then by Lemma \ref{lem:visible criterion} we have 
	$$
	\Gk(u_l-u_j, v_l-v_j)=1,\ \text{for}\  (u_l, v_l), (u_j, v_j)\in\boldsymbol{S},\ 0\leq j\neq l\leq N-1.
	$$
	This implies 
	$$
	\gcd(d_j, d_l)=1 \ \text{for}\ 0\leq j\neq l\leq N-1.
	$$ 
	It then follows that
	\begin{align*}
	{\sum}_{\leq}=&\sum_{\substack{d_0\cdots d_{N-1}\leq D\\ \gcd(d_j, d_l)=1, ~\forall\ 0\leq j\neq l\leq N-1 }}\mu(d_0)\cdots\mu(d_{N-1})\\
	& \left(\frac{x}{d_0\cdots d_{N-1}}+O(1)\right)\left(\frac{x}{d_0^k\cdots d_{N-1}^k}+O(1)\right).
	\end{align*}
	Then by Lemma \ref{lem: mean value of tau_k} we obtain
	\begin{align}
	{\sum}_{\leq}=&x^2\sum_{\substack{d_0\cdots d_{N-1}\leq D\\ \gcd(d_j, d_l)=1, ~\forall\ 0\leq j\neq l\leq N-1 }}\frac{\mu(d_0)\cdots\mu(d_{N-1})}{d_0^{k+1}\cdots d_{N-1}^{k+1}}+O\Bigg(x\sum_{d_0\cdots d_{N-1}\leq D} \frac{1}{d_0\cdots d_{N-1}} \Bigg)\nonumber\\
	&+O\Bigg(x\sum_{d_0\cdots d_{N-1}\leq D} \frac{1}{d_0^k\cdots d^k_{N-1}} \Bigg)+O\Bigg( \sum_{d_0\cdots d_{N-1}\leq D} 1 \Bigg) \nonumber \\
	=& x^2\sum_{\substack{d_0\cdots d_{N-1}\leq D\\ \gcd(d_j, d_l)=1, ~\forall\ 0\leq j\neq l\leq N-1 }}\frac{\mu(d_0)\cdots\mu(d_{N-1})}{d_0^{k+1}\cdots d_{N-1}^{k+1}}+O_k\Big(D\log^{N-1}D+x\log^{N}D\Big).
	\end{align}
	Writing $n=d_0\cdots d_{N-1}$, we then have
	\begin{align*}
	{\sum}_{\leq}=x^2\sum_{n\leq D}\frac{\mu(n)\tau_{N}(n)}{n^{k+1}}+O_k(D\log^{N-1}D+x\log^N D).
	\end{align*}
	Using  Lemma \ref{lem: mean value of tau_k}, we obtain
	\begin{align}\label{eq: Estimate for sum<=}
	{\sum}_{\leq}=x^2\sum_{n=1}^{\infty}\frac{\mu(n)\tau_{N}(n)}{n^{k+1}}+O_k \big({x^2D^{-k}\log^{N-1} D}+D\log^{N-1}D+x\log^N D\big).
	\end{align}
	
	i) If $1\leq N\leq k$, then we choose $D=x$. In this case, since each $d_j\le x^{1/k}$, $d_0\cdots d_{N-1}\le x^{N/k}\le x$. Thus the second sum $\sum_{>}$ in \eqref{eq: Divide N^1_k(S,x)} is empty since we already exclude  zero inputs of $\Gk(*,*)$ in the beginning of the proof. Inserting \eqref{eq: Estimate for sum<=} into \eqref{eq: Divide N^1_k(S,x)} yields \eqref{eq: Theorem for N_k^1(S,x)} in Theorem \ref{thm: Level 1 density} with
	$$
	E_1(x)\ll_k x\log^{N}x ~\text{for}~1\le N\le k.
	$$
	
	ii) If $k<N<2^{k+1}$, we need to make another choice for $D$, and deal with ${\sum}_{>}$ more carefully. Taking absolute value of $\mu(d)$, we obtain 
	$$
	{\sum}_{>}=\sum_{m, n\leq x}\sum\limits_{\substack{d_0\cdots d_{N-1}>D\\d_j|\gcd_k(m-u_j,n-v_j)\\0\leq j\leq N-1}}\mu(d_0)\cdots\mu(d_{N-1})\ll\sum_{m, n\leq x}\sum\limits_{\substack{d_0\cdots d_{N-1}>D\\d_j|\gcd_k(m-u_j,n-v_j)\\0\leq j\leq N-1}}1,
	$$
	which implies
	$$
	{\sum}_{>}\ll\sum_{\substack{m, n\leq x\\ \prod\limits_{0\leq j\leq N-1}\gcd_{k}(m-u_j,n-v_j)>D}}\tau\big(\Gk(m-u_0,n-v_0)\big)\cdots \tau\big(\Gk(m-u_{N-1},n-v_{N-1})\big)
	$$
	Using the bounds $\tau(n)\ll_{\varepsilon}n^{\varepsilon}$ for any $\varepsilon>0$, $N<2^{k+1}$ and $\Gk(m-u_j, n-v_j)\le x^{1/k}$,
	we have
	$$
	{\sum}_{>}\ll_{k, \varepsilon} x^{\varepsilon}\sum_{\substack{m, n\leq x\\ \prod\limits_{0\leq j\leq N-1}\Gk(m-u_j,n-v_j)>D}}1.
	$$
	Since $\prod\limits_{0\leq j\leq N-1}\gcd_{k}(m-u_j,n-v_j)>D$ implies $\Gk(m-u_{j^*},n-v_{j^*})>D^{1/N}$ for some $j^*\in\{0,\cdots,N-1\}$, we obtain
	\begin{align*}
	{\sum}_{>}\ll_{k, \varepsilon} x^{\varepsilon}\sum\limits_{0\leq j\leq N-1}\sum_{\substack{m, n\leq x\\ \Gk(m-u_j,n-v_j)>D^{1/N}}}1.
	\end{align*}
	By the definition of $\Gk$, we have
	$$
	{\sum}_{>}\ll_{k, \varepsilon} x^{\varepsilon}\sum\limits_{0\leq j\leq N-1}\sum\limits_{D^{1/N}<d\leq x^{1/k}}\sum_{\substack{m, n\leq x\\d\mid m-u_j\\d^k\mid n-v_j}}1.
	$$
	It follows that
	\begin{align}\label{eq: Estimate for sum>}
	{\sum}_{>}\ll_{k, \varepsilon} x^{2+\varepsilon}\sum\limits_{0\leq j\leq N-1}\sum\limits_{D^{1/N}<d\leq x^{1/k}}\frac{1}{d^{1+k}}\ll_{k, \epsilon} x^{2+\varepsilon}D^{-k/N}.
	\end{align}
	Collecting all the above gives
	$$
	N^1_k(\boldsymbol{S}, x)=x^2\prod_p \Bigg(1-\frac{N}{p^{k+1}} \Bigg)+O_{k,\epsilon}\big(D\log^{N-1}D+x^{2+\varepsilon}D^{-k/N}+x\log^N x\big).
	$$
	Taking $D=x^{\frac{2N}{N+k}}$ yields \eqref{eq: Theorem for N_k^1(S,x)} with 
	$$
	E_{1}(x)\ll_{k, \epsilon}  x^{2-\frac{2k}{N+k}+\varepsilon} ~\text{for}~k<N<2^{k+1}.
	$$
	
	\section{Proof of Theorem \ref{thm: Level 2 density} }
	
	In this section, we also assume $\Gk(*,*)$ does not take any input with zero coordinates. 
	
	If elements of $\boldsymbol{S}$ are pairwise $k$-visible to each other, then for any $(m,n)\in \boldsymbol{V}_k^2(\boldsymbol{S})$, there exists at most one $(u,v)\in\boldsymbol{S}$ such that $\Gk(m-u, n-v)=2$. Indeed, suppose  $\Gk(m-u_1, n-v_1)=\Gk(m-u_2, n-v_2)=2$ for some $(u_1, v_1)\neq (u_2, v_2)\in\boldsymbol{S}$, then we have
	$$2|(m-u_1), ~2|(m-u_2), ~2^k|(n-v_1), ~2^k|(n-v_2).$$
	Thus, $2|(u_2-u_1)$ and $2^k|(v_2-v_1)$, which contradicts the assumption $\Gk(u_2-u_1, v_2-v_1)=1$.

	By the above argument, we  write
	\begin{align}\label{eq: Level-2-count}
	N_k^2(\boldsymbol{S}, x)=N_k^1(\boldsymbol{S}, x)+\sum_{0\leq l\leq N-1}\sum_{\substack{ m, n\leq x\\ \Gk(m-u_l, n-v_l)=2\\ \Gk(m-u_j, n-v_j)=1\\ j\neq l 
	}}1+O_k(x).
	\end{align}
	Without loss of generality, we may assume $(u_0, v_0)=(0,0)$. We only need to estimate the inner sum of the second term in \eqref{eq: Level-2-count} for $l=0$, other cases are similar.  Denote
	\begin{align*}
	I(x):=\sum_{\substack{m, n\leq x\\ \Gk(m, n)=2\\ \Gk(m-u_j, n-v_j)=1\\ 1\leq j\leq N-1 } } 1.    
	\end{align*}
	We have
	\begin{align}\label{eq: Definition of I(x)}
	I(x)=\sum_{\substack{m, n\leq x\\2\mid m,2^k\mid n\\ \Gk(m/2, n/2^{k})=1\\ \Gk(m-u_j, n-v_j)=1\\ 1\leq j\leq N-1 } } 1=\sum_{\substack{m,n\leq x\\2\mid m,2^k\mid n}}\sum_{\substack{d_0\mid \Gk(m/2, n/2^k)\\ d_j\mid \Gk(m-u_j, n-v_j)\\ 1\leq j\leq N-1}} \mu(d_0)\cdots\mu(d_{N-1}).
	\end{align}
	By changing the order of  summation and making the substitutions $m=2d_0s$ and $n=(2d_0)^kt$, we obtain
	\begin{align}\label{eq: Main expression for I(x)}
	I(x)=\sum_{d_0, \cdots, d_{N-1}\leq x^{1/k}}\mu(d_0)\cdots\mu(d_{N-1})\sum_{\substack{s\leq x/(2d_0), t\leq x/(2d_0)^k\\2d_0 s\equiv u_j(\bmod d_j) \\ (2d_0)^k t\equiv v_j(\bmod d_j^k) \\1\leq j\leq N-1 } }1.
	\end{align}
	
	In order to get estimates of $I(x)$, we need to analyze the conditions in the inner sum. Fix $d_0, \cdots, d_{N-1}$, in order for those congruence equations having solutions, we need 
	$$
	\gcd(2d_0, d_j)\mid u_j, ~\gcd((2d_0)^k, d_j^k)\mid v_j
	$$
	for $1\leq j\leq N-1$.
	Since points $(u_j,v_j)$ are $k$-visible to point $(u_0,v_0)$, then Lemma \ref{lem:visible criterion} gives $\Gk(u_j, v_j)=1$. It follows that $
	\gcd(2d_0, d_j)=1$ for $1\leq j\leq N-1$.
	Moreover, in order for those congruence equations having solutions, we also need the following equations
	$$
	d_{j_1}l_{1}-d_{j_2}l_{2}= u_{j_2}-u_{j_1}, ~d^k_{j_1}t_{1}-d^k_{j_2}t_{2}= v_{j_2}-v_{j_1}
	$$
	have solutions for any $d_{j_1}$ and $d_{j_2}$ with $1\leq j_1\neq j_2\leq N-1$. This implies 
	$$
	\gcd(d_{j_1},d_{j_2})\mid u_{j_2}-u_{j_1},\ \gcd(d_{j_1}^k,d_{j_2}^k)\mid v_{j_2}-v_{j_1}
	$$
	for $1\leq j_1\neq j_2\leq N-1$.
	By the assumption of pairwise $k$-visibility of elements of $\boldsymbol{S}$, we have $\Gk(u_{j_2}-u_{j_1}, v_{j_2}-v_{j_1})=1$, and thus
	$\gcd(d_{j_1}, d_{j_2})=1$ for any $1\leq j_1\neq j_2\leq N-1$. 
	
	As what we did in Section \ref{Proof-Thm1}, we divide the sum over $d_0,\cdots,d_{N-1}$ into two parts according to $d_0\cdots d_{N-1}\leq D$ or not. Denote them by $I_{\leq}$ and $I_{>}$ respectively, then
	\begin{align}\label{eq: Divide I(x)}
	I(x)=I_{\leq}+I_{>}.
	\end{align}
	
	For $I_{\leq}$, we have
	$$
	I_{\leq}=\sum_{\substack{d_0\cdots d_{N-1}\leq D\\ \gcd(d_{j_1}, d_{j_2})=1, \forall j_1\neq j_2\\
			\gcd(2, d_j)=1, 1\leq j\leq N-1
	}} \mu(d_0)\cdots\mu(d_{N-1})\left(\frac{x}{2d_0\cdots d_{N-1}}+O(1)\right)\left(\frac{x}{2^kd_0^k\cdots d_{N-1}^k}+O(1)\right),
	$$
	and by Lemma \ref{lem: mean value of tau_k}, we get 
	$$
	I_{\leq}=\frac{x^2}{2^{k+1}} \sum_{\substack{d_0\cdots d_{N-1}\leq D\\ \gcd(d_{j_1}, d_{j_2})=1,\forall j_1\neq j_2\\
			\gcd(2, d_j)=1, 1\leq j\leq N-1
	}}\frac{\mu(d_0)\cdots\mu(d_{N-1})}{d_0^{k+1}\cdots d_{N-1}^{k+1}}+O_k\left(x\log^N x+D\log^{N-1} D\right).
	$$
	Making the substitution $n=d_0\cdots d_{N-1}$, we obtain
	$$
	I_{\leq}=\frac{x^2}{2^{k+1}} \sum_{n\leq D} \frac{\mu(n)}{n^{k+1}}h(n)+O_k\left(x\log^N x+D\log^{N-1} D\right).
	$$
	where 
	$$
	h(n)=\sum\limits_{\substack{n=d_0\cdots d_{N-1}\\ d_1,\cdots, d_{N-1} ~\text{odd} }} 1.
	$$
	Extending the sum over $n$ and using the bound $h(n)\leq \tau_{N}(n)$,  and by Lemma \ref{lem: mean value of tau_k}, we derive
	\begin{align}\label{eq: Estimate for I<=}
	I_{\leq }=\frac{x^2}{2^{k+1}} \sum_{n=1}^{\infty} \frac{\mu(n)}{n^{k+1}}h(n)+O_k\left(x^{2}D^{-k}\log^{N-1} D+x\log^N x+D\log^{N-1} D\right).
	\end{align}
	Note that $h(n)$ is multiplicative with $h(2)=1$ and $h(p)=N$ for $p> 2$ prime. Thus
	$$
	\sum_{n=1}^{\infty} \frac{\mu(n)}{n^{k+1}}h(n)=\bigg(1-\frac{1}{2^{k+1}} \bigg) \prod_{p>2} \bigg(1-\frac{N}{p^{k+1}} \bigg). 
	$$
	
	i) If $1\leq N\leq k$, then we choose $D=x$. In this case, since each $d_j\le x^{1/k}$, $d_0\cdots d_{N-1}\le x^{N/k}\le x$. Thus the second term $I_{>}$ in \eqref{eq: Divide I(x)} is empty. Inserting \eqref{eq: Estimate for I<=} into \eqref{eq: Divide I(x)} yields \eqref{eq: Theorem for N_k^2(S,x)} in Theorem \ref{thm: Level 2 density} with
	$$
	E_1(x)\ll_k x\log^{N}x ~\text{for}~1\le N\le k.
	$$
	
	ii) If $k<N\le 2^{k+1}$, we need to make another choice for $D$ and deal with $I_{>}$.
	By similar argument as before, we obtain
	$$
	I_{>}\ll\sum_{\substack{m,n\leq x\\2\mid m,2^k\mid n}}\sum_{\substack{d_0\cdots d_{N-1}>D\\d_0\mid \Gk(m/2, n/2^k)\\ d_j\mid \Gk(m-u_j, n-v_j)\\ 1\leq j\leq N-1}} 1,
	$$
	which gives
	$$
	I_{>}\ll\sum\limits_{\substack{m,n\leq x\\2\mid m,2^k\mid n\\ \prod\limits_{0\leq j\leq N-1}\Gk(m-u_j,n-v_j)>D}}\tau(\Gk(m/2,n/2^k))\prod\limits_{1\leq j\leq N-1}\tau(\Gk(m-u_j,n-v_j)).
	$$
	Using the bound $\tau(n)\ll_{\varepsilon} n^{\varepsilon}$ for any $\varepsilon>0$, by a similar argument as in the proof of Theorem \ref{thm: Level 1 density}, we obtain
	$$
	I_{>}\ll_{\varepsilon} x^{\varepsilon}\sum\limits_{\substack{m,n\leq x\\ \prod\limits_{0\leq j\leq N-1}\Gk(m-u_j,n-v_j)>D}}1\ll_{\varepsilon} x^{2+\varepsilon}D^{-k/N}.
	$$

	Hence, combining all the estimates and taking $D=x^{\frac{2N}{N+k}}$ yields 
	\begin{align*}
	I(x)=\frac{x^2}{2^{k+1}} \bigg(1-\frac{1}{2^{k+1}} \bigg) \prod_{p>2} \bigg(1-\frac{N}{p^{k+1}} \bigg) +O_{k,\epsilon}(x^{2-\frac{2k}{N+k}+\varepsilon}+x\log^N x).
	\end{align*}
	Plugging this into \eqref{eq: Level-2-count}, we obtain \eqref{eq: Theorem for N_k^2(S,x)} in Theorem \ref{thm: Level 2 density} with $$
	E_{2}(x)\ll_{k,\varepsilon}  x^{2-\frac{2k}{N+k}+\varepsilon} ~\text{for}~k<N\le 2^{k+1}.
	$$

	{\bf Acknowledgements.}
	The first author is partially supported by Shandong Provincial Natural Science Foundation (Grant No. ZR2019BA028).
	The second author is partially supported by the Humboldt Professorship of Professor Harald Helfgott. Both authors thank the anonymous referee for valuable suggestions.

\end{document}